\documentclass[hidelinks,onefignum,onetabnum]{siamart220329}
\usepackage{amsmath}
\usepackage{amssymb}
\usepackage{enumerate}
\usepackage{mathrsfs}
\usepackage{cases}
\usepackage{color}
\usepackage{tikz}
\usepackage{mathtools}
\usepackage{makecell}
\usepackage{multirow}
\usepackage{siunitx}
\usepackage{graphicx}
\definecolor{darkblue}{rgb}{0.0, 0.0, 0.55}
\definecolor{bordeaux}{rgb}{0.34, 0.01, 0.1}

\newsiamthm{example}{Example}


\usepackage{lipsum}
\usepackage{amsfonts}
\usepackage{graphicx}
\usepackage{epstopdf}
\usepackage{algorithmic}
\ifpdf
  \DeclareGraphicsExtensions{.eps,.pdf,.png,.jpg}
\else
  \DeclareGraphicsExtensions{.eps}
\fi


\newsiamremark{remark}{Remark}
\newsiamremark{hypothesis}{Hypothesis}
\crefname{hypothesis}{Hypothesis}{Hypotheses}
\newsiamthm{claim}{Claim}

\headers{A more efficient reformulation of complex SDP as real SDP}{Jie Wang}

\title{A more efficient reformulation of complex SDP as real SDP\thanks{Submitted to the editors DATE.
}}

\author{Jie Wang\thanks{Academy of Mathematics and Systems Science, Chinese Academy of Sciences, Beijing, China
  (\email{wangjie212@amss.ac.cn}, \url{https://wangjie212.github.io/jiewang/})}}

\usepackage{amsopn}


\def\R{{\mathbb{R}}}
\def\C{{\mathbb{C}}}
\def\N{{\mathbb{N}}}

\def\z{{\mathbf{z}}}

\def\a{{\boldsymbol{\alpha}}}

\def\b{{\boldsymbol{\beta}}}
\def\g{{\boldsymbol{\gamma}}}

\def\A{{\mathscr{A}}}

\def\S{{\mathbf{S}}}

\def\H{{\mathrm{H}}}
\def\M{{\mathbf{M}}}

\def\i{\hbox{\bf{i}}}
\def\supp{\hbox{\rm{supp}}}

\def\Tr{\hbox{\rm{Tr}}}

\def\II{{\mathcal{I}}}
\def\RR{{\mathcal{R}}}
\def\HH{{\mathcal{H}}}






\begin{document}

\maketitle

\begin{abstract}
This note proposes a new reformulation of complex semidefinite programs (SDP) as real SDPs. As an application, we present an economical reformulation of complex SDP relaxations of complex polynomial optimization problems as real SDPs and derive some further reductions by exploiting inner structure of the complex SDP relaxations. Various numerical examples demonstrate that our new reformulation runs significantly faster than the usual popular reformulation. 
\end{abstract}

\begin{keywords}
complex semidefinite programming, complex polynomial optimization, semidefinite programming, the complex moment-HSOS hierarchy, quantum information
\end{keywords}

\begin{MSCcodes}
90C22, 90C23
\end{MSCcodes}

\section{Introduction}
Complex semidefinite programs (SDP) arise from a diverse set of areas, such as combinatorial optimization \cite{goemans2001approximation}, optimal power flow \cite{josz2018lasserre,oustry2022certified}, quantum information theory \cite{araujo2023quantum,bae2015quantum,Watrous09}, and signal processing \cite{lu2019tightness,waldspurger2015phase}. In particular, they appear as convex relaxations of complex polynomial optimization problems (CPOP), giving rise to the complex moment-Hermitian-sum-of-squares (moment-HSOS) hierarchy \cite{josz2018lasserre,wang2022exploiting,wang2023real}. However, most modern SDP solvers deal with only real SDPs\footnote{As far as the author knows, {\tt SeDuMi} \cite{sedumi}, {\tt Sdolab} \cite{gilbert2017sdolab}, and {\tt Hypatia} \cite{coey2022solving} are the only solvers that can handle complex SDPs directly.}. In order to handle complex SDPs via real SDP solvers, it is mandatory to reformulate complex SDPs as equivalent real SDPs. A popular way\footnote{See for instance the online modeling cookbook of the commercial SDP solver {\tt MOSEK}: \url{https://docs.mosek.com/modeling-cookbook/sdo.html}.} to do so is to use the equivalent condition
\begin{equation}\label{sec1:eq1}
    H\succeq0\quad \iff \quad Y=\begin{bmatrix}H_R&-H_I\\H_I&H_R
    \end{bmatrix}\succeq0
\end{equation}
for a Hermitian matrix variable $H=H_R+H_I\i\in\C^{n\times n}$ with $H_R$ and $H_I$ being its real and imaginary parts respectively.
Note that the right-hand-side constraint in \eqref{sec1:eq1} entails certain structure and to feed it to an SDP solver, we need to impose extra affine constraints to the positive semidefinite (PSD) constraint $Y\succeq0$:
\begin{equation}\label{sec1:eq2}
    Y_{i,j}=Y_{i+n,j+n},\, Y_{i,j+n}+Y_{j,i+n}=0,\quad i=1,\ldots,n,\, j=i,\ldots,n.
\end{equation}
This conversion is quite simple but could be inefficient when $n$ is large. In this note, inspired by Lagrange duality, we propose a new reformulation of complex SDPs as real SDPs. The benefit of this new reformulation is that it contains no extra affine constraints while keeping the same matrix size and hence has lower complexity. In the same manner, we can obtain a new reformulation of complex SDP relaxations of CPOPs as real SDPs. Moreover, by exploiting inner structure of the complex SDP relaxations, we are able to remove a bunch of redundant affine constraints, which leads to an even more economical real reformulation of the complex SDP relaxations.
Various numerical experiments (including the complex matrix completion problem, complex SDP relaxations for randomly generated CPOPs, and the alternating current optimal power flow (AC-OPF) problem) confirm our theoretical expectation and demonstrate that the new reformulation is substantially more efficient than the usual popular one. Finally, numerical experiments also indicate that our implementation of the new reformulation with {\tt MOSEK} \cite{mosek} runs much faster than the implementation of the original complex formulation with {\tt Hypatia} \cite{gilbert2017plea}, probably because {\tt MOSEK} is commercial and the implementations of SDP algorithms based on real numbers are more mature and robust.

\vspace{1em}
\noindent{\bf Notation.} The symbol $\N$ denotes the set of nonnegative integers. For $n\in\N\setminus\{0\}$, let $[n]\coloneqq\{1,2,\ldots,n\}$. We write $|A|$ for the cardinality of a set $A$. Let $\i$ be the imaginary unit, satisfying $\i^2 = -1$. For $d\in\N$, let $\N^n_d\coloneqq\{(\alpha_i)_{i}\in\N^n\mid\sum_{i=1}^n\alpha_i\le d\}$ and let $\omega_{n,d}\coloneqq\binom{n+d}{d}$ be the cardinality of $\N^n_d$. For $\a=(\alpha_i)_i\in\N^n_d$ and an $n$-tuple of variables $\z=\{z_1,\ldots,z_n\}$, let $\z^{\a}\coloneqq z_1^{\alpha_1}\cdots z_n^{\alpha_n}$. For a complex number $a$, $\overline{a}$ (resp.\ $\RR(a),\II(a)$) denotes the conjugate (resp.\ real part, imaginary part) of $a$, and for a complex vector $v$, $v^{\H}$ denotes the conjugate transpose of $v$. For a positive integer $n$, the set of $n\times n$ symmetric (resp.\ Hermitian) matrices is denoted by $\S^n$ (resp.\ $\H^n$). We use $A\succeq0$ to indicate that the matrix $A$ is PSD. For $A,B\in\C^{n\times n}$, we denote by $\langle A,B\rangle$ the trace inner-product, defined by $\langle A, B\rangle=\Tr(A^{\H}B)$, where $A^{\H}$ stands for the conjugate transpose of $A$. For $A\in\R^{n\times n}$, $A^{\intercal}$ stands for the transpose of $A$.

We endow $\C^m$ (viewed as a $\R$-vector space) with the scalar product $\langle \cdot, \cdot\rangle_{\R}$ defined by
\begin{equation*}
    \langle u, v\rangle_{\R}=\RR(u^{\H}v)=\RR(u)^{\intercal}\RR(v)+\II(u)^{\intercal}\II(v), \quad u,v\in\C^m,
\end{equation*}
which will be used in forming the dual complex SDP problem (to obtain a real objective).
For $u,v\in\R^m$, $\langle u, v\rangle\coloneqq u^{\intercal}v$, where $u^{\intercal}$ stands for the transpose of $u$.
We associate each $A\in\C^{n\times n}$ with a Hermitian matrix $\HH(A)\coloneqq\frac{1}{2}(A+A^{\H})$. One can check that $\RR(\langle A, H\rangle)=\langle \HH(A), H\rangle$ for any $H\in\H^n$.

\section{The real reformulations of complex SDPs}\label{sec:main}
Given a tuple of complex matrices $A_1,\ldots,A_m\in\C^{n\times n}$, we define a $\R$-linear operator $\A\colon\H^n\rightarrow\C^m$ by 
\begin{equation}
\A(H)\coloneqq(\langle A_i, H\rangle)_{i=1}^m\in\C^m, \quad\forall H\in\H^n.    
\end{equation}

Let us consider the following complex SDP:
\begin{equation}\label{csdpp}
    \begin{cases}
     \sup\limits_{H\in\H^n} &\langle C,H\rangle\\
      \,\,\,\,\rm{s.t.}&\A(H)= b,\\
      &H\succeq0,
    \end{cases}\tag{PSDP-\mbox{$\C$}}
\end{equation}
where $C\in\H^n,b\in\C^m$. In order to convert \eqref{csdpp} to a real SDP, we define two real linear operators $\A_R,\A_I\colon\R^{n\times n}\rightarrow\R^m$ associated with $\A$ by
\begin{equation}\label{realA}
    \A_R(S)\coloneqq\left(\langle \RR(A_i), S\rangle\right)_{i=1}^m\in\R^m, \quad\forall S\in\R^{n\times n}
\end{equation}
and
\begin{equation}\label{comA}
    \A_I(S)\coloneqq\left(\langle \II(A_i), S\rangle\right)_{i=1}^m\in\R^m, \quad\forall S\in\R^{n\times n},
\end{equation}
respectively. Moreover, assume $H=H_R+H_I\i,C=C_R+C_I\i,b=b_R+b_I\i$ with $H_R,H_I,C_R,C_I\in\R^{n\times n}$, $b_R,b_I\in\R^m$. We can now convert \eqref{csdpp} to a real SDP:
\begin{equation}\label{csdpp1}
    \begin{cases}
     \sup\limits_{Y\in\S^{2n}} &\langle C_R,H_R\rangle+\langle C_I,H_I\rangle\\
      \,\,\,\,\rm{s.t.}&\A_R(H_R)+\A_I(H_I)= b_R,\\
      &\A_R(H_I)-\A_I(H_R)= b_I,\\
      &Y=\begin{bmatrix}H_R&-H_I\\
      H_I&H_R\end{bmatrix}\succeq0.
    \end{cases}\tag{PSDP-\mbox{$\R$}}
\end{equation}

As mentioned in the introduction, to feed the PSD constraint in \eqref{csdpp1} to an SDP solver, we need to include also the extra $n(n+1)$ affine constraints listed in \eqref{sec1:eq2}, which could be inefficient in practice. Below we show that by taking a dual point of view, we can actually get rid of this issue.

Before formulating the dual problem of \eqref{csdpp}, we explicitly give the adjoint operator of $\A$.
\begin{lemma}\label{lm1}
The adjoint operator $\A^*$ of $\A$ satisfies $\A^*(y)=\HH\left(\sum_{i=1}^my_iA_i\right)$ for $y\in\C^m$.
\end{lemma}
\begin{proof}
For any $H\in\H^n$, we have
\begin{align*}
\left\langle \A^*(y),H\right\rangle&=\left\langle y,\A(H)\right\rangle_{\R}=\RR\left(\sum_{i=1}^m\overline{y}_i\langle A_i,H\rangle\right)\\
&=\RR\left(\left\langle\sum_{i=1}^m y_iA_i,H\right\rangle\right)=\left\langle \HH\left(\sum_{i=1}^my_iA_i\right),H\right\rangle,
\end{align*}
which yields $\A^*(y)=\HH\left(\sum_{i=1}^my_iA_i\right)$.
\end{proof}

Then by the convex duality theory, the dual problem of \eqref{csdpp} reads as
\begin{equation}\label{csdpd}
    \begin{cases}
     \inf\limits_{y\in\C^m} &\langle b, y\rangle_{\R}\\
      \,\,\,\rm{s.t.}&\A^*(y)\succeq C.
    \end{cases}\tag{DSDP-\mbox{$\C$}}
\end{equation}
Assume $y=y_R+y_I\i$ with $y_R,y_I\in\R^m$. Using Lemma \ref{lm1}, we deduce that $\A^*(y)=U+V\i$ with $$U\coloneqq\frac{1}{2}\sum_{i=1}^m\RR(y_i)\RR(A_i+A_i^{\intercal})-\II(y_i)\II(A_i+A_i^{\intercal})$$ and
$$V\coloneqq\frac{1}{2}\sum_{i=1}^m\II(y_i)\RR(A_i-A_i^{\intercal})+\RR(y_i)\II(A_i-A_i^{\intercal}).$$
Thus, we can convert \eqref{csdpd} to a real SDP by using the equivalent condition \eqref{sec1:eq1}:
\begin{equation}\label{csdpd1}
    \begin{cases}
     \inf\limits_{y_R,y_I\in\R^m} &b_R^{\intercal}y_R+b_I^{\intercal}y_I\\
      \,\,\,\,\,\,\,\,\rm{s.t.}&\begin{bmatrix}
      U-C_R&-V+C_I\\
      V-C_I&U-C_R
      \end{bmatrix}\succeq0.
    \end{cases}\tag{DSDP-\mbox{$\R$}}
\end{equation}
Let $X=\left[\begin{smallmatrix}X_1&X_3^{\intercal}\\X_3&X_2\end{smallmatrix}\right]\in\S^{2n}$ be the PSD variable of \eqref{csdpd1} with $X_1,X_2,X_3\in\R^{n\times n}$. 
Then the Lagrangian associated with \eqref{csdpd1} is given by
\begin{align*}
    &\,L(X,y_R,y_I)\\
    =&\,b_R^{\intercal}y_R+b_I^{\intercal}y_I-\left\langle \begin{bmatrix}
      X_1&X_3^{\intercal}\\
      X_3&X_2
      \end{bmatrix}, \begin{bmatrix}
      U-C_R&-V+C_I\\
      V-C_I&U-C_R
      \end{bmatrix}\right\rangle\\
      =&\,b_R^{\intercal}y_R+b_I^{\intercal}y_I-\langle X_1+X_2, U-C_R\rangle - \langle X_3-X_3^{\intercal},V-C_I\rangle\\
      =&\,b_R^{\intercal}y_R+b_I^{\intercal}y_I+\langle C_R,X_1+X_2 \rangle+\langle C_I,X_3-X_3^{\intercal}\rangle\\
      &\,-\sum_{i=1}^m\langle \RR(A_i), X_1+X_2\rangle\RR(y_i)
      +\sum_{i=1}^m\langle\II(A_i),X_1+X_2\rangle\II(y_i)\\
      &\,-\sum_{i=1}^m\langle\RR(A_i),X_3-X_3^{\intercal}\rangle\II(y_i)-\sum_{i=1}^m\langle\II(A_i), X_3-X_3^{\intercal}\rangle\RR(y_i)\\
      =&\,\langle C_R,X_1+X_2 \rangle+\langle C_I,X_3-X_3^{\intercal}\rangle+\langle b_R-\A_R(X_1+X_2)-\A_I(X_3-X_3^{\intercal}), y_R\rangle\\
      &\,+\langle b_I-\A_R(X_3-X_3^{\intercal})+\A_I(X_1+X_2), y_I\rangle.
\end{align*}
Therefore, the dual problem of \eqref{csdpd1} can be written down as
\begin{equation}\label{csdpp2}
    \begin{cases}
     \sup\limits_{X\in\S^{2n}} &\langle C_R,X_1+X_2 \rangle+\langle C_I,X_3-X_3^{\intercal} \rangle\\
      \,\,\,\,\rm{s.t.}&\A_R(X_1+X_2)+\A_I(X_3-X_3^{\intercal})=b_R,\\
      &\A_R(X_3-X_3^{\intercal})-\A_I(X_1+X_2)=b_I,\\
      &X=\begin{bmatrix}
      X_1&X_3^{\intercal}\\
      X_3&X_2
      \end{bmatrix}\succeq0.
    \end{cases}\tag{PSDP-\mbox{$\R$}'}
\end{equation}

\begin{theorem}\label{thm1}
\eqref{csdpp2} is equivalent to \eqref{csdpp1} (in the sense that they share the same optimum). As a result, \eqref{csdpp2} is equivalent to \eqref{csdpp}. In addition, if $X^{\star}=\left[\begin{smallmatrix}X_1^{\star}&(X_3^{\star})^{\intercal}\\X_3^{\star}&X_2^{\star}\end{smallmatrix}\right]$ is an optimal solution to \eqref{csdpp2}, then $H^{\star}=(X^{\star}_1+X_2^{\star})+(X^{\star}_3-(X_3^{\star})^{\intercal})\i$ is an optimal solution to \eqref{csdpp}.
\end{theorem}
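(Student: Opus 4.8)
The plan is to prove directly that \eqref{csdpp1} and \eqref{csdpp2} have the same optimal value — denote these values $v_1$ and $v_2$ — by exhibiting, in each direction, a feasible point of one problem built from a feasible point of the other with the same objective value; equivalence with \eqref{csdpp} is then inherited from the standard equivalence between \eqref{csdpp} and \eqref{csdpp1}. I will use two elementary facts throughout. First, $\left[\begin{smallmatrix}H_R&-H_I\\H_I&H_R\end{smallmatrix}\right]$ lies in $\S^{2n}$ exactly when $H_R$ is symmetric and $H_I$ is skew-symmetric, i.e.\ exactly when $H=H_R+H_I\i\in\H^n$; moreover, for such $H$ and any $C\in\H^n$ one has $\langle C,H\rangle=\langle C_R,H_R\rangle-\langle C_I,H_I\rangle\in\R$, the cross terms $\Tr(C_RH_I)$ and $\Tr(C_IH_R)$ vanishing because in each, one factor is symmetric and the other skew-symmetric. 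So, once the block structure is respected, all three problems carry the same real-valued objective. Second, congruences preserve positive semidefiniteness and a sum of PSD matrices is PSD.

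To get $v_1\le v_2$, I will take a feasible $Y=\left[\begin{smallmatrix}H_R&-H_I\\H_I&H_R\end{smallmatrix}\right]$ of \eqref{csdpp1} and set
\begin{equation*}
X\,\coloneqq\,\tfrac12\,\diag(I,-I)\,Y\,\diag(I,-I)\,=\,\tfrac12\begin{bmatrix}H_R&H_I\\-H_I&H_R\end{bmatrix}.
\end{equation*}
Then $X\succeq0$ (a congruence of $Y\succeq0$) and $X\in\S^{2n}$; writing $X=\left[\begin{smallmatrix}X_1&X_3\\X_3^{\intercal}&X_2\end{smallmatrix}\right]$ this says $X_1=X_2=\tfrac12H_R$ and $X_3=\tfrac12H_I$, so $X_1+X_2=H_R$ and, using skew-symmetry of $H_I$, $X_3-X_3^{\intercal}=H_I$. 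Substituting into the two affine constraints and the objective of \eqref{csdpp2} reproduces exactly those of \eqref{csdpp1} at $Y$, so $X$ is feasible for \eqref{csdpp2} with the same objective value.

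For $v_2\le v_1$, I will take a feasible $X=\left[\begin{smallmatrix}X_1&X_3\\X_3^{\intercal}&X_2\end{smallmatrix}\right]\succeq0$ of \eqref{csdpp2}, put $H_R\coloneqq X_1+X_2$ (symmetric), $H_I\coloneqq X_3-X_3^{\intercal}$ (skew-symmetric), and $Y\coloneqq\left[\begin{smallmatrix}H_R&-H_I\\H_I&H_R\end{smallmatrix}\right]\in\S^{2n}$. The one point needing an argument is $Y\succeq0$, which I will get from the identity
\begin{equation*}
Y\,=\,\diag(I,-I)\,X\,\diag(I,-I)\,+\,\begin{bmatrix}0&I\\I&0\end{bmatrix}X\begin{bmatrix}0&I\\I&0\end{bmatrix},
\end{equation*}
exhibiting $Y$ as a sum of two congruences of $X\succeq0$; equivalently $\left[\begin{smallmatrix}p\\q\end{smallmatrix}\right]^{\intercal}Y\left[\begin{smallmatrix}p\\q\end{smallmatrix}\right]=\left[\begin{smallmatrix}p\\-q\end{smallmatrix}\right]^{\intercal}X\left[\begin{smallmatrix}p\\-q\end{smallmatrix}\right]+\left[\begin{smallmatrix}q\\p\end{smallmatrix}\right]^{\intercal}X\left[\begin{smallmatrix}q\\p\end{smallmatrix}\right]\ge0$ for all $p,q\in\R^n$. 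As before, the two affine constraints and the objective of \eqref{csdpp1} at $Y$ coincide term by term with those of \eqref{csdpp2} at $X$, so $Y$ is feasible for \eqref{csdpp1} with the same objective value.

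Combining the two inequalities gives $v_1=v_2$, and composing with the equivalence between \eqref{csdpp} and \eqref{csdpp1} shows \eqref{csdpp2} is equivalent to \eqref{csdpp}. Finally, if $X^{\star}$ is optimal for \eqref{csdpp2}, the matrix $Y^{\star}$ produced from it in the $v_2\le v_1$ step is feasible for \eqref{csdpp1} with objective value $v_2=v_1$, hence optimal for \eqref{csdpp1}; unwinding the equivalence between \eqref{csdpp1} and \eqref{csdpp} then identifies the corresponding optimizer of \eqref{csdpp} as $H^{\star}=(X_1^{\star}+X_2^{\star})+(X_3^{\star}-(X_3^{\star})^{\intercal})\i$. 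I expect the only step requiring real thought to be the positive semidefiniteness of $Y$ in the direction $v_2\le v_1$; apart from that, the subtlety to watch is the sign in the correspondence $H_I\leftrightarrow X_3-X_3^{\intercal}$ together with the skew-symmetry that makes it self-consistent.
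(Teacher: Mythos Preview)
Your proof is correct and follows essentially the same approach as the paper's: in each direction you construct the same feasible point of one problem from a feasible point of the other with matching objective value. The only cosmetic difference is in the $v_2\le v_1$ direction, where the paper obtains the PSD matrix as $X+J^{-1}XJ$ with $J=\left[\begin{smallmatrix}0&-I\\I&0\end{smallmatrix}\right]$ (followed by a sign flip), whereas you write $Y$ directly as $\diag(I,-I)\,X\,\diag(I,-I)+\left[\begin{smallmatrix}0&I\\I&0\end{smallmatrix}\right]X\left[\begin{smallmatrix}0&I\\I&0\end{smallmatrix}\right]$; both are sums of two congruences of $X$ and yield the same conclusion.
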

\begin{proof}
Let us denote the optima of \eqref{csdpp1} and \eqref{csdpp2} by $v$ and $v'$, respectively.
Suppose that $Y=\left[\begin{smallmatrix}H_R&-H_I\\H_I&H_R\end{smallmatrix}\right]$ is a feasible solution to \eqref{csdpp1}. Then one can easily check that $X\coloneqq\frac{1}{2}Y$ is a feasible solution to \eqref{csdpp2}. Moreover, we have $\langle C_R,X_1+X_2 \rangle+\langle C_I,X_3-X_3^{\intercal} \rangle=\langle C_R,H_R\rangle+\langle C_I,H_I\rangle$ and it follows $v\le v'$. On the other hand, suppose $X=\left[\begin{smallmatrix}X_1&X_3^{\intercal}\\X_3&X_2\end{smallmatrix}\right]$ is a feasible solution to \eqref{csdpp2}. Since PSDness is preserved under basis transformation, we have
\begin{equation*}
\begin{bmatrix}0&-1\\1&0\end{bmatrix}^{-1}X\begin{bmatrix}0&-1\\1&0\end{bmatrix}=\begin{bmatrix}X_2&-X_3\\-X_3^{\intercal}&X_1\end{bmatrix}\succeq0,
\end{equation*}
and since PSDness is preserved under addition, we further have
\begin{equation*}
Y=\begin{bmatrix}H_R&-H_I\\H_I&H_R\end{bmatrix}\coloneqq\begin{bmatrix}X_1+X_2&X_3^{\intercal}-X_3\\X_3-X_3^{\intercal}&X_1+X_2\end{bmatrix}\succeq0.
\end{equation*}
One can easily see that $Y$ is a feasible solution to \eqref{csdpp1} and in addition, it holds $\langle C_R,H_R\rangle+\langle C_I,H_I\rangle=\langle C_R,X_1+X_2 \rangle+\langle C_I,X_3-X_3^{\intercal} \rangle$. Thus $v\ge v'$, which proves the equivalence. The latter statement of the theorem is clear from the above arguments.
\end{proof}

In contrast to \eqref{csdpp1}, the PSD constraint in \eqref{csdpp2} does not entails any special structure, and thus no extra affine constraint is required to describe it. This is why the conversion \eqref{csdpp2} is more appealing than \eqref{csdpp1} from the computational perspective.

\begin{remark}
A similar reformulation to \eqref{csdpp2} but for a restricted class of complex SDP relaxations of multiple-input multiple-output detection has appeared in \cite{lu2019tightness}.
\end{remark}

\section{Application to complex SDP relaxations for CPOPs}
In this section, we apply the reformulation \eqref{csdpp2} to complex SDP relaxations arising from the complex moment-HSOS hierarchy for CPOPs. A CPOP is given by
\begin{equation}\label{cpop}
\begin{cases}
\inf\limits_{\z\in\C^s} &f(\z,\overline{\z})=\sum_{\b,\g}b_{\b,\g}\z^{\b}\overline{\z}^{\g}\\
\,\,\rm{s.t.}&g_i(\z,\overline{\z})=\sum_{\b,\g}g^i_{\b,\g}\z^{\b}\overline{\z}^{\g}\ge0,\quad i\in[t],
\end{cases}\tag{CPOP}
\end{equation}
where $\overline{\z}\coloneqq(\overline{z}_1,\ldots,\overline{z}_s)$ stands for the conjugate of complex variables $\z\coloneqq(z_1,\ldots,z_s)$. The functions $f,g_1,\ldots,g_t$ are real-valued polynomials in $\z$ and $\overline{\z}$ (their coefficients satisfy $b_{\b,\g}=\overline{b_{\g,\b}}$, $g^i_{\b,\g}= \overline{g^i_{\g,\b}}$). The \emph{support} of $f$ is defined by $\supp(f)\coloneqq\{(\b,\g)\mid b_{\b,\g}\ne0\}$. For $i\in[t]$, $\supp(g_i)$ is defined in the same way.

Fix a $d\in\N$. Let $y=(y_{\b,\g})_{(\b,\g)\in\N_d^s\times\N_d^s}$ be a complex sequence indexed by $(\b,\g)\in\N_d^s\times\N_d^s$ and satisfying $y_{\b,\g}=\overline{y_{\g,\b}}$. Let $L_{y}$ be the linear functional defined by
\begin{equation*}
f=\sum_{(\b,\g)}b_{\b,\g}\z^{\b}\overline{\z}^{\g}\mapsto L_{y}(f)=\sum_{(\b,\g)}b_{\b,\g}y_{\b,\g}.
\end{equation*}
The \emph{complex moment} matrix $\M_{d}(y)$ associated with $y$ is the Hermitian matrix indexed by $\N^s_d$ such that
\begin{equation*}
[\M_d(y)]_{\b,\g}\coloneqq L_{y}(\z^{\b}\overline{\z}^{\g})=y_{\b,\g}, \quad\forall\b,\g\in\N^s_d.
\end{equation*}
Suppose that $g=\sum_{(\b',\g')}g_{\b',\g'}\z^{\b'}\overline{\z}^{\g'}$ is a complex polynomial. The \emph{complex localizing} matrix $\M_{d}(gy)$ associated with $g$ and $y$ is the Hermitian matrix indexed by $\N^s_d$ such that
\begin{equation*}
[\M_{d}(gy)]_{\b,\g}\coloneqq L_{y}(g\z^{\b}\overline{\z}^{\g})=\sum_{(\b',\g')}g_{\b',\g'}y_{\b+\b',\g+\g'}, \quad\forall\b,\g\in\N^s_d.
\end{equation*}

Let $d_0\coloneqq\max\,\{|\b|,|\g|: b_{\b,\g}\ne0\}$ and $d_i\coloneqq\max\,\{|\b|,|\g|: g^i_{\b,\g}\ne0\}$ for $i\in[t]$, where $|\cdot|$ denotes the sum of entries. Let $d_{\min}\coloneqq\max\,\{d_0,d_1,\ldots,d_m\}$.
For any $d\ge d_{\min}$, the $d$-th ($d$ is called the \emph{relaxation order}) complex moment relaxation for \eqref{cpop} is given by
\begin{equation}\label{sec2-eq1}
\begin{cases}
\inf\limits_{y}& L_y(f)=\langle b, y\rangle_{\R}\\
\rm{s.t.}&\M_{d}(y)\succeq0,\\
&\M_{d-d_i}(g_iy)\succeq0,\quad i\in[t],\\
&y_{\mathbf{0},\mathbf{0}}=1.
\end{cases}\tag{Mom-\mbox{$\C$}}
\end{equation}
\eqref{sec2-eq1} and its dual problem form the complex moment-HSOS hierarchy of \eqref{cpop}. For more details on this hierarchy, we refer the reader to \cite{josz2018lasserre,wang2022exploiting}.

For any $(\b,\g)\in\N^s_d\times\N^s_d$, we associate it with a matrix $A^0_{\b,\g}\in\R^{\omega_{s,d}\times\omega_{s,d}}$ defined by
\begin{equation}
    [A^0_{\b,\g}]_{\b',\g'}=\begin{cases}1,&\text{if } (\b',\g')=(\b,\g),\\
    0,&\text{otherwise}.
    \end{cases}
\end{equation}
Moreover, for each $i\in[t]$, we associate any $(\b,\g)\in\N^s_{d-d_i}\times\N^s_{d-d_i}$ with a matrix $A^i_{\b,\g}\in\C^{\omega_{s,d-d_i}\times\omega_{s,d-d_i}}$ defined by
\begin{equation}
[A^i_{\b,\g}]_{\b',\g'}=
\begin{cases}
g^i_{\b'',\g''},&\text{if } (\b'+\b'',\g'+\g'')=(\b,\g),\\
0,&\text{otherwise}.
\end{cases}
\end{equation}
Now for each $i=0,1,\ldots,t$, we define the $\R$-linear operator $\A^i$ by 
\begin{equation*}
    \A^i(H)\coloneqq\left(\langle A^i_{\b,\g}, H\rangle\right)_{(\b,\g)\in\N^s_{d-d_i}\times\N^s_{d-d_i}},\quad H\in\H^{\omega_{s,d-d_i}}.
\end{equation*}
For convenience let us set $g_0\coloneqq1$. By construction, it holds
\begin{equation*}
\M_{d-d_i}(g_iy)=\sum_{(\b,\g)\in\N^s_{d-d_i}\times\N^s_{d-d_i}}A_{\b,\g}^iy_{\b,\g}=(\A^i)^*(y),\quad i=0,1,\ldots,t.
\end{equation*}
Therefore, one can rewrite \eqref{sec2-eq1} as follows: 
\begin{equation}\label{cmom}
    \begin{cases}
     \inf\limits_{y}&\langle b, y\rangle_{\R}\\
      \rm{s.t.}
      &(\A^i)^*(y)\succeq0,\quad i=0,1,\ldots,t,\\
      &y_{\mathbf{0},\mathbf{0}}=1,
    \end{cases}\tag{Mom-\mbox{$\C$}'}
\end{equation}
whose dual reads as
\begin{equation}\label{hsos}
    \begin{cases}
     \sup\limits_{\lambda,H^i} &\lambda\\
      \,\rm{s.t.}&\sum_{i=0}^t[\A^i(H^i)]_{\b,\g}+\delta_{(\b,\g),\,(\mathbf{0},\mathbf{0})}\lambda=b_{\b,\g},\quad(\b,\g)\in\N^s_d\times\N^s_d,\\
      &H^i\succeq0,\quad i=0,1,\ldots,t.
    \end{cases}\tag{HSOS-\mbox{$\C$}}
\end{equation}
Note that we have used the Kronecker delta $\delta_{(\b,\g),\,(\mathbf{0},\mathbf{0})}$ in \eqref{hsos}.

Let us from now on fix any order ``$<$" on $\N^s$.
\begin{proposition}
\eqref{hsos} is equivalent to the following complex SDP:
\begin{equation}\label{hsos1}
    \begin{cases}
     \sup\limits_{\lambda,H^i} &\lambda\\
      \,\rm{s.t.}&\sum_{i=0}^t[\A^i(H^i)]_{\b,\g}+\delta_{(\b,\g),\,(\mathbf{0},\mathbf{0})}\lambda=b_{\b,\g},\\
      &\b\le\g,\quad (\b,\g)\in\N^s_d\times\N^s_d,\\
      &H^i\succeq0,\quad i=0,1,\ldots,t.
    \end{cases}\tag{HSOS-\mbox{$\C$}'}
\end{equation}
\end{proposition}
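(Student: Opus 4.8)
The plan is to show that \eqref{hsos} and \eqref{hsos1} have \emph{exactly} the same feasible set, so that the two programs literally coincide (they have the same objective $\lambda$) and in particular share the same optimum. The point is that every equality constraint of \eqref{hsos} indexed by a pair $(\b,\g)$ with $\b\not\le\g$ is the complex conjugate of the constraint indexed by $(\g,\b)$, hence is automatically satisfied once the latter is retained.

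The one structural ingredient is a conjugate-symmetry of the operators $\A^i$: for each $i\in\{0,1,\ldots,t\}$ and each Hermitian $H^i$,
\[
[\A^i(H^i)]_{\b,\g}=\overline{[\A^i(H^i)]_{\g,\b}}\qquad\text{for all }(\b,\g)\in\N^s_d\times\N^s_d .
\]
For $i=0$ this is immediate, since $[\A^0(H^0)]_{\b,\g}=\langle A^0_{\b,\g},H^0\rangle=[H^0]_{\b,\g}$ and $H^0$ is Hermitian. For $i\in[t]$ I would expand, directly from the definition of $A^i_{\b,\g}$, $[\A^i(H^i)]_{\b,\g}=\langle A^i_{\b,\g},H^i\rangle=\sum_{\b',\g'}g^i_{\b-\b',\g-\g'}[H^i]_{\b',\g'}$, then take the conjugate of the $(\g,\b)$ version, apply the coefficient symmetry $\overline{g^i_{\b,\g}}=g^i_{\g,\b}$ together with $\overline{[H^i]_{\b',\g'}}=[H^i]_{\g',\b'}$, and finally relabel the summation indices $\b'\leftrightarrow\g'$ to recover $[\A^i(H^i)]_{\b,\g}$. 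Conceptually this is nothing but the familiar fact that a localizing matrix built from a real-valued polynomial $g_i$ and Hermitian data is itself Hermitian. This identity, and in particular keeping the index shifts $\b\mapsto\b-\b'$, $\g\mapsto\g-\g'$ straight while simultaneously invoking the Hermitian symmetry of $H^i$ and the coefficient symmetry of $g_i$, is the only step that demands any care; everything else is bookkeeping.

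Granting the identity, the rest is routine. Since $H^i\succeq0$ forces $H^i\in\H^{\omega_{s,d-d_i}}$, fix a feasible point $(\lambda,(H^i)_i)$ of \eqref{hsos1}. If $\b\not\le\g$ then $\g<\b$, so $\b\ne\g$ and hence $\delta_{(\b,\g),(\mathbf{0},\mathbf{0})}=\delta_{(\g,\b),(\mathbf{0},\mathbf{0})}=0$; the constraint of \eqref{hsos1} indexed by $(\g,\b)$ is retained, and conjugating it and using the displayed symmetry together with $\overline{b_{\g,\b}}=b_{\b,\g}$ produces exactly the constraint of \eqref{hsos} indexed by $(\b,\g)$. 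Hence $(\lambda,(H^i)_i)$ is feasible for \eqref{hsos}; the reverse inclusion is immediate because \eqref{hsos1} keeps only a subset of the constraints of \eqref{hsos}. The feasible sets therefore coincide, and so do the optima, which proves the proposition.
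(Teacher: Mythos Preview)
Your proposal is correct and follows essentially the same approach as the paper: both arguments establish the conjugate symmetry $[\A^i(H^i)]_{\b,\g}=\overline{[\A^i(H^i)]_{\g,\b}}$ by expanding the definition of $A^i_{\b,\g}$, invoking $g^i_{\b'',\g''}=\overline{g^i_{\g'',\b''}}$ and the Hermitian property of $H^i$, and then relabelling the summation indices, after which the redundancy of the constraints indexed by $\b>\g$ (combined with $b_{\b,\g}=\overline{b_{\g,\b}}$) is immediate. Your write-up is slightly more explicit about the index bookkeeping, but the substance is identical.
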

\begin{proof}
It suffices to show that for $\b<\g$, $\sum_{i=0}^t[\A^i(H^i)]_{\g,\b}=b_{\g,\b}$ is equivalent to $\sum_{i=0}^t[\A^i(H^i)]_{\b,\g}=b_{\b,\g}$. Indeed, this equivalence follows from $b_{\g,\b}=\overline{b_{\b,\g}}$ and 
\begin{align*}
    \sum_{i=0}^t[\A^i(H^i)]_{\g,\b}&=\sum_{i=0}^t\langle A^i_{\g,\b}, H^i\rangle\\
    &=[H^0]_{\g,\b}+\sum_{i=1}^t\sum_{\substack{(\g',\b')\in\N^s_{d-d_i}\times\N^s_{d-d_i}\\(\g'',\b'')\in\supp(g)\\(\g'+\g'',\b'+\b'')=(\g,\b)}}g^i_{\g'',\b''}[H^i]_{\g',\b'}\\
    &=\sum_{i=0}^t\langle \overline{A^i_{\b,\g}}, H^i\rangle=\sum_{i=0}^t\overline{[\A^i(H^i)]_{\b,\g}}.
\end{align*}
\end{proof}

Let $\A^i_R$ and $\A^i_I$ be the real linear operators associated with $\A^i$ which are defined in a similar way as \eqref{realA} and \eqref{comA}. With $H^i=H^i_R+H^i_I\i,b=b_R+b_I\i$, \eqref{hsos1} is equivalent to the following real SDP by using the equivalent condition \eqref{sec1:eq1}:
\begin{equation}\label{rsos0}
    \begin{cases}
     \sup\limits_{\lambda,Y^i} &\lambda\\
     \,\rm{s.t.}&\sum_{i=0}^t\left([\A^i_R(H^i_R)]_{\b,\g}+[\A^i_I(H^i_I)]_{\b,\g}\right)+\delta_{(\b,\g),\,(\mathbf{0},\mathbf{0})}\lambda=[b_R]_{\b,\g},\\
      &\sum_{i=0}^t\left([\A^i_R(H^i_I)]_{\b,\g}-[\A^i_I(H^i_R)]_{\b,\g}\right)=[b_I]_{\b,\g},\\
      &\b\le\g,\quad(\b,\g)\in\N^s_d\times\N^s_d,\\
      &Y^i=\begin{bmatrix}
      H^i_R&-H^i_I\\
      H^i_I&H^i_R
      \end{bmatrix}\succeq0,\quad i=0,1,\ldots,t.
    \end{cases}\tag{HSOS-\mbox{$\R$}}
\end{equation}
On the other hand, by invoking Theorem \ref{thm1}, we obtain another equivalent real SDP conversion of \eqref{hsos1}:
\begin{equation}\label{rsos}
    \begin{cases}
     \sup\limits_{\lambda,X^i} &\lambda\\
     \,\rm{s.t.}&\sum_{i=0}^t\left([\A^i_R(X^i_1+X^i_2)]_{\b,\g}+[\A^i_I(X^i_3-(X^i_3)^{\intercal})]_{\b,\g}\right)+\delta_{(\b,\g),\,(\mathbf{0},\mathbf{0})}\lambda=[b_R]_{\b,\g},\\
      &\sum_{i=0}^t\left([\A^i_R(X^i_3-(X^i_3)^{\intercal})]_{\b,\g}-[\A^i_I(X^i_1+X^i_2)]_{\b,\g}\right)=[b_I]_{\b,\g},\\
      &\b\le\g,\quad(\b,\g)\in\N^s_d\times\N^s_d,\\
      &X^i=\begin{bmatrix}
      X^i_1&(X^i_3)^{\intercal}\\
      X^i_3&X^i_2
      \end{bmatrix}\succeq0,\quad i=0,1,\ldots,t.
    \end{cases}
\end{equation}

\begin{proposition}
\eqref{rsos} is equivalent to the following real SDP:
\begin{equation}\label{rsos1}
    \begin{cases}
     \sup\limits_{\lambda,X^i} &\lambda\\
     \,\rm{s.t.}&\sum_{i=0}^t\left([\A^i_R(X^i_1+X^i_2)]_{\b,\g}+[\A^i_I(X^i_3-(X^i_3)^{\intercal})]_{\b,\g}\right)+\delta_{(\b,\g),\,(\mathbf{0},\mathbf{0})}\lambda=[b_R]_{\b,\g},\\
      &\sum_{i=0}^t\left([\A^i_R(X^i_3-(X^i_3)^{\intercal})]_{\b,\g}-[\A^i_I(X^i_1+X^i_2)]_{\b,\g}\right)=[b_I]_{\b,\g},\quad\b\ne\g,\\
      &\b\le\g,\quad(\b,\g)\in\N^s_d\times\N^s_d,\\
      &X^i=\begin{bmatrix}
      X^i_1&(X^i_3)^{\intercal}\\
      X^i_3&X^i_2
      \end{bmatrix}\succeq0,\quad i=0,1,\ldots,t.
    \end{cases}\tag{HSOS-\mbox{$\R$}'}
\end{equation}
\end{proposition}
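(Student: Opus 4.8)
The plan is to recognize that \eqref{rsos1} is obtained from \eqref{rsos} by deleting, from the second (imaginary-part) family of affine equations, precisely the rows indexed by the diagonal pairs $\b=\g$, and then to show that every deleted equation is automatically satisfied by any tuple of matrices of the prescribed block shape with each $X^i$ symmetric. Once that is in place, the feasible sets of \eqref{rsos} and \eqref{rsos1} coincide exactly and, the objective $\lambda$ being untouched, so do the optima; this is precisely the claimed equivalence. Thus the whole argument reduces to checking, for every $\b\in\N^s_d$ and every symmetric $X^i=\left[\begin{smallmatrix}X^i_1&X^i_3\\(X^i_3)^{\intercal}&X^i_2\end{smallmatrix}\right]$, that
\[
\sum_{i=0}^t\left([\A^i_R(X^i_3-(X^i_3)^{\intercal})]_{\b,\b}+[\A^i_I(X^i_1+X^i_2)]_{\b,\b}\right)=[b_I]_{\b,\b},
\]
and in fact that both sides vanish.

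For the right-hand side I would appeal to the Hermitian symmetry $b_{\b,\g}=\overline{b_{\g,\b}}$ of the coefficients of $f$: setting $\g=\b$ gives $b_{\b,\b}=\overline{b_{\b,\b}}\in\R$, hence $[b_I]_{\b,\b}=\II(b_{\b,\b})=0$. For the left-hand side I would reassemble, as in Theorem~\ref{thm1}, the complex matrix $H^i\coloneqq(X^i_1+X^i_2)+(X^i_3-(X^i_3)^{\intercal})\i$, which lies in $\H^{\omega_{s,d-d_i}}$ because symmetry of $X^i$ makes $X^i_1+X^i_2$ symmetric and $X^i_3-(X^i_3)^{\intercal}$ skew-symmetric. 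Expanding $\A^i=\A^i_R+\A^i_I\i$ then identifies the left-hand side above with $\II\big(\sum_{i=0}^t[\A^i(H^i)]_{\b,\b}\big)$, so it remains only to see that each diagonal entry $[\A^i(H^i)]_{\b,\b}$ is real.

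That last reality statement is the one point of substance, and it is already essentially contained in the proof of the preceding proposition: the computation there establishing $[\A^i(H^i)]_{\g,\b}=\overline{[\A^i(H^i)]_{\b,\g}}$ — which uses $g^i_{\b'',\g''}=\overline{g^i_{\g'',\b''}}$ together with $[H^i]_{\b',\g'}=\overline{[H^i]_{\g',\b'}}$, the latter because $H^i$ is Hermitian — is valid for all index pairs, and specializing $\g=\b$ gives $[\A^i(H^i)]_{\b,\b}=\overline{[\A^i(H^i)]_{\b,\b}}\in\R$, so the imaginary part of the sum is zero. I do not anticipate a genuine obstacle: after isolating the redundant rows the verification is routine bookkeeping plus the already-available conjugation identity; the only thing worth keeping an eye on is that it is the block \emph{form} of the $X^i$, i.e., their symmetry rather than their positive semidefiniteness, that is actually used, so the deleted equations are redundant over the whole affine slice, not merely over the PSD cone.
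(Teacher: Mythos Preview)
Your proposal is correct. The only difference from the paper's proof is packaging: the paper works entirely over the reals and shows the two summands vanish separately, computing $\langle (A^i_{\b,\b})_R, X^i_3-(X^i_3)^{\intercal}\rangle=0$ by pairing $(\b',\g')\leftrightarrow(\g',\b')$ in the sum (using $\RR(g^i_{\b'',\g''})=\RR(g^i_{\g'',\b''})$ and skew-symmetry of $X^i_3-(X^i_3)^{\intercal}$), and $\langle (A^i_{\b,\b})_I, X^i_1+X^i_2\rangle=0$ by the same pairing (using $\II(g^i_{\b'',\g''}+g^i_{\g'',\b''})=0$ and symmetry of $X^i_1+X^i_2$). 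You instead reassemble the Hermitian matrix $H^i=(X^i_1+X^i_2)+(X^i_3-(X^i_3)^{\intercal})\i$, recognize the diagonal constraint as $\II\big([\A^i(H^i)]_{\b,\b}\big)$, and invoke the conjugation identity $[\A^i(H^i)]_{\g,\b}=\overline{[\A^i(H^i)]_{\b,\g}}$ already established in the previous proposition. Your route is a bit cleaner conceptually since it recycles the earlier computation verbatim; the paper's route makes the individual cancellation mechanisms (skew-symmetry on one side, vanishing imaginary part of $g^i_{\b'',\g''}+g^i_{\g'',\b''}$ on the other) more explicit. Either way the content is the same Hermitian symmetry, and your observation that only symmetry of $X^i$---not positive semidefiniteness---is used is accurate and worth noting.
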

\begin{proof}
We need to show that the following constraints
\begin{equation}\label{redun}
    \sum_{i=0}^t\left([\A^i_R(X^i_3-(X^i_3)^{\intercal})]_{\b,\b}-[\A^i_I(X^i_1+X^i_2)]_{\b,\b}\right)=[b_I]_{\b,\b}=0,\quad\b\in\N^s_d
\end{equation}
in \eqref{rsos} are redundant. For each $i=0,1,\ldots,t$ and $\b\in\N^s_d$, we have
\begin{align*}
&\,\left\langle(A^i_{\b,\b})_R,X^i_3-(X^i_3)^{\intercal}\right\rangle\\
=&\,\sum_{\substack{(\b',\g')\in\N^s_{d-d_i}\times\N^s_{d-d_i}\\(\b'',\g'')\in\supp(g)\\(\b'+\b'',\g'+\g'')=(\b,\b)}}\RR(g^i_{\b'',\g''})\left([X^i_3]_{\b',\g'}-[(X^i_3)^{\intercal}]_{\b',\g'}\right)\\
=&\,\frac{1}{2}\sum_{\substack{(\b',\g')\in\N^s_{d-d_i}\times\N^s_{d-d_i}\\(\b'',\g'')\in\supp(g)\\(\b'+\b'',\g'+\g'')=(\b,\b)}}\RR(g^i_{\b'',\g''})\left([X^i_3]_{\b',\g'}+[X^i_3]_{\g',\b'}-[(X^i_3)^{\intercal}]_{\b',\g'}-[(X^i_3)^{\intercal}]_{\g',\b'}\right)\\
=&\,0,
\end{align*}
where we have used fact that $[(X^i_3)^{\intercal}]_{\b',\g'}=[X^i_3]_{\g',\b'}$ and $[(X^i_3)^{\intercal}]_{\g',\b'}=[X^i_3]_{\b',\g'}$. It follows that $[\A^i_R(X^i_3-(X^i_3)^{\intercal})]_{\b,\b}=\langle(A^i_{\b,\b})_R,X^i_3-(X^i_3)^{\intercal}\rangle=0$.

In addition, for each $i=0,1,\ldots,t$ and $\b\in\N^s_d$, we have
\begin{align*}
&\,\left\langle(A^i_{\b,\b})_I,X^i_1+X^i_2\right\rangle\\
=&\,\sum_{\substack{(\b',\g')\in\N^s_{d-d_i}\times\N^s_{d-d_i}\\(\b'',\g'')\in\supp(g)\\(\b'+\b'',\g'+\g'')=(\b,\b)}}\II(g^i_{\b'',\g''})\left([X^i_1]_{\b',\g'}+[X^i_2]_{\b',\g'}\right)\\
=&\,\frac{1}{2}\sum_{\substack{(\b',\g')\in\N^s_{d-d_i}\times\N^s_{d-d_i}\\(\b'',\g'')\in\supp(g)\\(\b'+\b'',\g'+\g'')=(\b,\b)}}\II\left(g^i_{\b'',\g''}+g^i_{\g'',\b''}\right)\left([X^i_1]_{\b',\g'}+[X^i_2]_{\b',\g'}\right)\\
=&\,0,
\end{align*}
where we have used fact that $\II(g^i_{\b'',\g''}+g^i_{\g'',\b''})=\II(g^i_{\b'',\g''}+\overline{g}^i_{\b'',\g''})=0$ and that $X^i_1,X^i_2$ are symmetric. It follows that $[\A^i_I(X^i_1+X^i_2)]_{\b,\b}=\langle (A^i_{\b,\b})_I, X^i_1+X^i_2\rangle=0$.

Putting all above together yields \eqref{redun}.
\end{proof}

We have proved the following theorem.
\begin{theorem}
\eqref{rsos1} is equivalent to \eqref{hsos}.
\end{theorem}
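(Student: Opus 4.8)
The plan is to obtain the theorem by pure transitivity, stitching together three equivalences that are already in hand. The first Proposition of this section gives that \eqref{hsos} and \eqref{hsos1} share the same optimum, since the equality constraints of \eqref{hsos} indexed by pairs with $\b>\g$ are the complex conjugates of those indexed by $\b<\g$ and may be discarded. Next, \eqref{hsos1} and \eqref{rsos} share the same optimum: this is exactly the content of Theorem \ref{thm1} applied to \eqref{hsos1} (after first splitting \eqref{hsos1} into real and imaginary parts to reach \eqref{rsos0}, and then performing, blockwise, the substitution $H^i_R\leftrightarrow X^i_1+X^i_2$ and $H^i_I\leftrightarrow X^i_3-(X^i_3)^{\intercal}$ dictated by Theorem \ref{thm1}). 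Finally, the second Proposition of this section gives that \eqref{rsos} and \eqref{rsos1} share the same optimum, because the diagonal constraints \eqref{redun} are automatically satisfied by every feasible tuple. Chaining the three yields that \eqref{rsos1} and \eqref{hsos} share the same optimum, which is the assertion.

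The one link that is not literally a citation is the middle one, because Theorem \ref{thm1} is stated for the single-block template \eqref{csdpp}, whereas \eqref{hsos1} carries the $t+1$ Hermitian PSD variables $H^0,\ldots,H^t$ together with the free real scalar $\lambda$. I would justify the extension without re-running the computation of Section \ref{sec:main} by noting two facts about the proof of Theorem \ref{thm1}: it invokes no strong-duality hypothesis, being an explicit pair of feasible-solution maps $Y\mapsto X$ and $X\mapsto Y$ with matching objective values; and both maps, as well as the conjugations by $\left[\begin{smallmatrix}0&-1\\1&0\end{smallmatrix}\right]$ used to establish positive semidefiniteness, act blockwise on $\diag(H^0,\ldots,H^t)$. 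The scalar $\lambda$ is real, hence inert under the real/imaginary split and contributing only to the real-part equation (here $b_{\mathbf{0},\mathbf{0}}=\overline{b_{\mathbf{0},\mathbf{0}}}\in\R$ and the Kronecker delta is real), which is precisely how it appears in \eqref{rsos}. With these observations the blockwise application of Theorem \ref{thm1} to \eqref{hsos1} produces \eqref{rsos} verbatim.

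The main obstacle I anticipate is therefore only the careful statement of this ``block-diagonal plus free real variable'' version of Theorem \ref{thm1}; I would keep it to a couple of sentences by appealing to the structure of that proof rather than repeating it. One could instead redo the Lagrangian-duality derivation of Section \ref{sec:main} directly for \eqref{hsos1}, but the transitivity route reuses existing work and is shorter. If desired, I would also append the solution-recovery statement: from an optimal tuple $\{X^i\}$ of \eqref{rsos1} one reads off $H^i=(X^i_1+X^i_2)+(X^i_3-(X^i_3)^{\intercal})\i$ optimal for \eqref{hsos1}, and then, restoring the constraints with $\b>\g$ via conjugation as in the first Proposition, optimal for \eqref{hsos}.
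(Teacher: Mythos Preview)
Your proposal is correct and matches the paper's approach exactly: the paper gives no separate proof of this theorem, simply stating it after the chain \eqref{hsos}$\,\Leftrightarrow\,$\eqref{hsos1}$\,\Leftrightarrow\,$\eqref{rsos}$\,\Leftrightarrow\,$\eqref{rsos1} has been assembled via the two Propositions and the invocation of Theorem~\ref{thm1}. Your care about the multi-block-plus-free-scalar extension of Theorem~\ref{thm1} is a detail the paper glosses over with the phrase ``by invoking Theorem~\ref{thm1} to \eqref{hsos1}''; your blockwise justification via the explicit feasible-solution maps is the right way to close that gap.
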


Before closing the section, we compare complexity of different real SDP reformulations for complex SDP relaxations of \eqref{cpop} in Table \ref{tab0}.
\begin{table}[htbp]
\caption{Comparison of complexity of different real SDP reformulations for complex SDP relaxations of \eqref{cpop}. $n_{\rm{sdp}}$: the maximal size of SDP matrix, $m_{\rm{sdp}}$: the number of affine constraints.}\label{tab0}
\renewcommand\arraystretch{1.2}
\centering
\begin{tabular}{c|c|c}
&\eqref{rsos0}&\eqref{rsos1}\\
\hline
$n_{\rm{sdp}}$&$2\omega_{s,d}$&$2\omega_{s,d}$\\
\hline
$m_{\rm{sdp}}$&$2\omega_{s,d}^2+2\omega_{s,d}+\sum_{i=1}^t\omega_{s,d-d_i}$&$\omega_{s,d}^2$\\
\end{tabular}
\end{table}

\section{Numerical experiments}
In this section, we benchmark the performance of the two real reformulations for complex SDPs using the software {\tt TSSOS} 1.2.1\footnote{{\tt TSSOS} is freely available at \href{https://github.com/wangjie212/TSSOS}{https://github.com/wangjie212/TSSOS}.} in which {\tt MOSEK} 10.0 \cite{mosek} is employed as an SDP solver with default settings. For comparison, we also include the results of directly solving complex SDPs obtained with {\tt Hypatia} 0.8.1 \cite{coey2022solving} in Sections \ref{subsec0}--\ref{subsec3}. We emphasis that the purpose of numerical experiments is to compare different formulations of complex SDPs and a comparison between solvers is not our focus in this paper.
All numerical experiments were performed on a desktop computer with an Intel(R) Core(TM) i9-10900 CPU@2.80GHz and 64G RAM. When presenting the results, the column labelled by `opt' records the optimum and the column labelled by `time' records computational time in seconds. Moreover, the symbol `-' means the SDP solver runs out of memory, and the symbol `$*$' means computational time exceeds 10000 seconds.

\subsection{The complex matrix completion problem}\label{subsec0}
The complex matrix completion problem seeks to recover a low-rank matrix $M\in\C^{s\times t}$ from a subset of entries $\{M_{ij}\}_{(i,j)\in\Omega}$ \cite{candes2012exact,candes2010power}, which could be tackled via the convex optimization problem:
\begin{equation}\label{mc}
\begin{cases}
\inf\limits_{Z\in\C^{s\times t}}&\|Z\|_*\\
\,\,\,\,\,\,\rm{s.t.}&Z_{ij}=M_{ij},\quad\forall(i,j)\in\Omega,
\end{cases}
\end{equation}
where $\|Z\|_*\coloneqq\Tr(Z^{\H}Z)^{\frac{1}{2}}$ is the nuclear norm of $Z$. One can equivalently cast \eqref{mc} as a complex SDP:
\begin{equation}\label{mc1}
\begin{cases}
\inf\limits_{X\in\H_n}&\Tr(X)\\
\,\,\,\rm{s.t.}&Z_{ij}=M_{ij},\quad\forall(i,j)\in\Omega,\\
&X=\begin{bmatrix}U&Z^{\H}\\ Z&V\end{bmatrix}\succeq0.
\end{cases}
\end{equation}
In this subsection, we generate random instances of the complex matrix completion problem \eqref{mc} as follows: 1) select $\Omega\subseteq[s]\times[s]$ uniformly at random from all subsets with cardinality $10s$ for $s\in\{20,30,40,50,60,70\}$; 2) set $M=M_1M_2^{\H}$ with the real and imaginary parts of entries of $M_1\in\C^{s\times 5}$ and $M_2\in\C^{s\times 5}$ being selected i.i.d. from the standard normal distribution.
The related results are shown in Table \ref{tab00}. From the table, we see that the reformulation \eqref{rsos1} is $1\sim3$ magnitudes faster than the reformulation \eqref{rsos0}.

\begin{table}[htbp]
\caption{The complex matrix completion problem.}\label{tab00}
\renewcommand\arraystretch{1.2}
\centering
\begin{tabular}{c|c|c|c|c|c|c|c|c}
\multirow{2}*{$s$}&\multicolumn{3}{c|}{\eqref{csdpp1}}&\multicolumn{3}{c|}{\eqref{csdpp2}}&\multicolumn{2}{c}{\eqref{csdpp}}\\
\cline{2-9}
&$m_{\rm{sdp}}$&opt&time&$m_{\rm{sdp}}$&opt&time&opt&time\\
\hline
20&1948&315.882&0.62&308&315.882&0.06&315.882&3.46\\
\hline
30&4168&456.591&5.38&508&456.591&0.11&456.591&31.1\\
\hline
40&7160&670.611&62.7&680&670.611&0.37&670.611&153\\
\hline
50&11010&861.510&175&910&861.510&0.77&861.510&626\\
\hline
60&15602&925.590&573&1082&925.590&1.07&925.590&1899\\
\hline
70&21056&1108.93&1337&1316&1108.93&1.81&1108.93&4870\\
\end{tabular}
\end{table}

\subsection{Minimizing a random complex quartic polynomial over the unit sphere}\label{subsec1}
Our second example is to minimize a complex quartic polynomial over the unit sphere:
\begin{equation}\label{random:eq1}
\begin{cases}
\inf\limits_{\z\in\C^{s}} &[\z]_2^{\H}Q[\z]_2\\
\,\,\,\rm{s.t.}&|z_1|^2+\cdots+|z_s|^2=1,
\end{cases}
\end{equation}
where $[\z]_2$ is the column vector of monomials in $\z$ up to degree two and $Q\in\H^{|[\z]_2|}$ is a random Hermitian matrix whose entries are selected with respect to the standard normal distribution.

We approach \eqref{random:eq1} for $s=5,7,\ldots,15$ with the second and third HSOS relaxations. The related results are shown in Table \ref{tab1}. From the table, we see that the reformulation \eqref{rsos1} is several ($2\sim7$) times as fast as the reformulation \eqref{rsos0}, and the speedup becomes more significant as the SDP size grows.

\begin{table}[htbp]
\caption{Minimizing a random complex quartic polynomial over the unit sphere.}\label{tab1}
\renewcommand\arraystretch{1.2}
\centering
\resizebox{\linewidth}{!}{
\begin{tabular}{c|c|c|c|c|c|c|c|c|c}
\multirow{2}*{$s$}&\multirow{2}*{$d$}&\multicolumn{3}{c|}{\eqref{rsos0}}&\multicolumn{3}{c|}{\eqref{rsos1}}&\multicolumn{2}{c}{\eqref{hsos1}}\\
\cline{3-10}
&&$m_{\rm{sdp}}$&opt&time&$m_{\rm{sdp}}$&opt&time&opt&time\\
\hline
\multirow{2}{*}{5}&2&966&-11.2409&0.11&441&-11.2409&0.05&-11.2409&0.12\\
\cline{2-10}
&3&6846&-9.47725&8.13&3136&-9.47725&2.00&-9.47725&6.54\\
\hline
\multirow{2}{*}{7}&2&2736&-14.2314&0.97&1296&-14.2314&0.28&-14.2314&0.59\\
\cline{2-10}
&3&30372&-11.0407&389&14400&-11.0407&57.0&-11.0407&474\\
\hline
\multirow{2}{*}{9}&2&6270&-19.0019&5.73&3025&-19.0019&1.62&-19.0019&4.61\\
\cline{2-10}
&3&100320&-&-&48400&-15.5614&1944&-&-\\
\hline
\multirow{2}{*}{11}&2&12480&-22.8630&31.7&6084&-22.8630&6.67&-22.8630&32.3\\
\cline{2-10}
&3&271882&-&-&132496&-&-&-&-\\
\hline
\multirow{2}{*}{13}&2&22470&-25.6352&145&11025&-25.6352&23.5&-25.6352&174\\
\cline{2-10}
&3&639450&-&-&313600&-&-&-&-\\
\hline
\multirow{2}{*}{15}&2&37536&-29.1672&585&18496&-29.1672&86.1&-29.1672&802\\
\cline{2-10}
&3&1351976&-&-&665856&-&-&-&-\\
\end{tabular}}
\end{table}

\subsection{Minimizing a random complex quartic polynomial with unit-norm variables} 
The next example is to minimize a random complex quartic polynomial with unit-norm variables:
\begin{equation}\label{random:eq2}
\begin{cases}
\inf\limits_{\z\in\C^{s}} &[\z]_2^{\H}Q[\z]_2\\
\,\,\,\rm{s.t.}&|z_i|^2=1,\quad i=1,\ldots,s,
\end{cases}
\end{equation}
where $Q\in\H^{|[\z]_2|}$ is a random Hermitian matrix whose entries are selected with respect to the uniform probability distribution on $[0,1]$.

We approach \eqref{random:eq2} for $s=5,7,\ldots,15$ with the second and third HSOS relaxations. The related results are shown in Table \ref{tab2}. From the table, we see that the reformulation \eqref{rsos1} is about one magnitude faster than the reformulation \eqref{rsos0}, and again the speedup becomes more significant as the SDP size grows.

\begin{table}[htbp]
	\caption{Minimizing a random complex quartic polynomial with unit-norm variables.}\label{tab2}
	\renewcommand\arraystretch{1.2}
	\centering
 \resizebox{\linewidth}{!}{
\begin{tabular}{c|c|c|c|c|c|c|c|c|c}
\multirow{2}*{$s$}&\multirow{2}*{$d$}&\multicolumn{3}{c|}{\eqref{rsos0}}&\multicolumn{3}{c|}{\eqref{rsos1}}&\multicolumn{2}{c}{\eqref{hsos1}}\\
\cline{3-10}
&&$m_{\rm{sdp}}$&opt&time&$m_{\rm{sdp}}$&opt&time&opt&time\\
\hline
\multirow{2}{*}{5}&2&734&-24.4919&0.10&271&-24.4919&0.03&-24.4919&0.21\\
\cline{2-10}
&3&4474&-24.4919&2.34&1281&-24.4919&0.26&-24.4919&10.9\\
\hline
\multirow{2}{*}{7}&2&2202&-56.5289&0.65&869&-56.5289&0.16&-56.5289&1.15\\
\cline{2-10}
&3&21158&-46.7128&132&6637&-46.7128&7.44&-46.7128&520\\
\hline
\multirow{2}{*}{9}&2&5242&-114.342&4.62&2161&-114.342&0.73&-114.342&5.29\\
\cline{2-10}
&3&73312&-&-&24691&-81.2676&184&-&-\\
\hline
\multirow{2}{*}{11}&2&10718&-202.436&32.1&4555&-202.436&3.86&-202.436&30.0\\
\cline{2-10}
&3&206188&-&-&73327&-&-&-&-\\
\hline
\multirow{2}{*}{13}&2&19686&-338.041&126&8555&-338.041&12.7&-338.041&162\\
\cline{2-10}
&3&499438&-&-&185277&-&-&-&-\\
\hline
\multirow{2}{*}{15}&2&33394&-514.226&678&14761&-514.226&55.1&-514.226&705\\
\cline{2-10}
&3&1081514&-&-&414841&-&-&-&-\\
\end{tabular}}
\end{table}

\subsection{Minimizing a randomly generated sparse complex quartic polynomial over multi-spheres}\label{subsec3}
Given $l\in\N\backslash\{0\}$, we randomly generate a sparse complex quartic polynomial as follows: Let $f=\sum_{i=1}^lf_i\in\C[z_{1},\ldots,z_{5(l+1)},\overline{z}_{1},\ldots,\overline{z}_{5(l+1)}]$,\footnote{$\C[\z,\overline{\z}]$ denotes the ring of complex polynomials in variables $\z,\overline{\z}$.} where for all $i\in[l]$, $f_i=\overline{f}_i\in\C[z_{5(i-1)+1},\ldots,z_{5(i-1)+10},\overline{z}_{5(i-1)+1},\ldots,\overline{z}_{5(i-1)+10}]$ is a sparse complex quartic polynomial whose coefficients (real/imaginary parts) are selected with respect to the uniform probability distribution on $[-1,1]$. Then we consider the following CPOP:
\begin{equation}\label{scpop}
\begin{cases}
\inf\limits_{\z\in\C^{5(l+1)}}&f(\z,\overline{\z})\\
\,\quad\rm{s.t.}&\sum_{j=1}^{10}|z_{5(i-1)+j}|^2=1,\quad i=1,\ldots,l.
\end{cases}
\end{equation}
The sparsity in \eqref{scpop} can be exploited to derive a sparsity-adapted complex moment-HSOS hierarchy \cite{wang2022exploiting}.
We solve the second sparse HSOS relaxation of \eqref{scpop} for $l=40,80,\ldots,400$. The results are displayed in Table \ref{random2}. From the table we see that the reformulation \eqref{rsos1} is $1.5\sim2$ times as fast as the reformulation \eqref{rsos0}. Moreover, for this problem, solving the original complex SDP with {\tt Hypatia} is extremely slow probably due to the fact that the SDPs contain many PSD blocks.

\begin{table}[htbp]
\caption{Minimizing a randomly generated sparse complex quartic polynomial over multi-spheres.}\label{random2}
\renewcommand\arraystretch{1.2}
\centering
\resizebox{\linewidth}{!}{
\begin{tabular}{c|c|c|c|c|c|c|c|c}
\multirow{2}*{$l$}&\multicolumn{3}{c|}{\eqref{rsos0}}&\multicolumn{3}{c|}{\eqref{rsos1}}&\multicolumn{2}{c}{\eqref{hsos1}}\\
\cline{2-9}
&$m_{\rm{sdp}}$&opt&time&$m_{\rm{sdp}}$&opt&time&opt&time\\
\hline
40&23090&-98.9240&3.12&12529&-98.9240&2.06&-98.9240&886\\
\hline
80&46768&-197.577&12.6&25549&-197.577&8.07&-197.577&5433\\
\hline
120&70958&-292.024&30.1&38871&-292.024&19.0&$*$&$*$\\
\hline
160&94278&-389.652&45.9&51563&-389.652&30.7&$*$&$*$\\
\hline
200&117526&-482.684&84.5&64185&-482.684&37.7&$*$&$*$\\
\hline
240&140298&-578.896&130&76389&-578.896&59.5&$*$&$*$\\
\hline
280&162504&-671.047&173&89241&-671.047&65.4&$*$&$*$\\
\hline
320&187528&-766.403&206&102171&-766.403&88.5&$*$&$*$\\
\hline
360&210370&-866.771&291&114589&-866.771&147&$*$&$*$\\
\hline
400&233396&-963.137&297&127173&-963.137&138&$*$&$*$\\
\end{tabular}}
\end{table}

\subsection{Application to the AC-OPF problem}
The AC-OPF is a central problem in power systems, which aims to minimize the generation cost of an alternating current transmission network under physical and operational constraints. Mathematically, it can be formulated as the following CPOP:
\begin{equation}\label{opf}
\begin{cases}
\inf\limits_{V_i,S_k^g}&\sum_{k\in G}\left(\mathbf{c}_{2k}(\RR(S_{k}^g))^2+\mathbf{c}_{1k}\RR(S_{k}^g)+\mathbf{c}_{0k}\right)\\
\,\,\,\textrm{s.t.}&\angle V_r=0,\\
&\mathbf{S}_{k}^{gl}\le S_{k}^{g}\le \mathbf{S}_{k}^{gu},\quad\forall k\in G,\\
&\boldsymbol{\upsilon}_{i}^l\le|V_i|\le \boldsymbol{\upsilon}_{i}^u,\quad\forall i\in N,\\
&\sum_{k\in G_i}S_k^g-\mathbf{S}_i^d-\mathbf{Y}_i^{sh}|V_{i}|^2=\sum_{(i,j)\in E_i\cup E_i^R}S_{ij},\quad\forall i\in N,\\
&S_{ij}=(\overline{\mathbf{Y}}_{ij}-\mathbf{i}\frac{\mathbf{b}_{ij}^c}{2})\frac{|V_i|^2}{|\mathbf{T}_{ij}|^2}-\overline{\mathbf{Y}}_{ij}\frac{V_i\overline{V}_j}{\mathbf{T}_{ij}},\quad\forall (i,j)\in E,\\
&S_{ji}=(\overline{\mathbf{Y}}_{ij}-\mathbf{i}\frac{\mathbf{b}_{ij}^c}{2})|V_j|^2-\overline{\mathbf{Y}}_{ij}\frac{\overline{V}_iV_j}{\overline{\mathbf{T}}_{ij}},\quad\forall (i,j)\in E,\\
&|S_{ij}|\le \mathbf{s}_{ij}^u,\quad\forall (i,j)\in E\cup E^R,\\
&\boldsymbol{\theta}_{ij}^{\Delta l}\le \angle (V_i \overline{V}_j)\le \boldsymbol{\theta}_{ij}^{\Delta u},\quad\forall (i,j)\in E,\\
\end{cases}
\end{equation}
where $V_i$ is the voltage, $S_k^{g}$ is the power generation, $S_{ij}$ is the power flow (all are complex variables; $\angle\cdot$ stands for the angle of a complex number) and all symbols in boldface are constants. Notice that $G$ is the collection of generators and $N$ is the collection of buses. For a full description on the AC-OPF problem, we refer the reader to \cite{baba2019} as well as \cite{bienstock2020}. 

We select test cases from the AC-OPF library \href{https://github.com/power-grid-lib/pglib-opf}{PGLiB-OPF} \cite{baba2019}.
For each case, we solve the minimal relaxation step of the sparse HSOS hierarchy \cite{wang2022exploiting}. The results are displayed in Table \ref{ac-opf1}. From the table, we again see that the reformulation \eqref{rsos1} is several ($1.4\sim5$) times as fast as the reformulation \eqref{rsos0}.

\begin{table}[htbp]
\caption{The results for the AC-OPF problem. $s$: the number of CPOP variables; $t$: the number of CPOP constraints.}\label{ac-opf1}
\renewcommand\arraystretch{1.2}
	\centering
        \resizebox{\linewidth}{!}{
\begin{tabular}{c|c|c|c|c|c|c|c|c}
\multirow{2}*{Case}&\multirow{2}*{$s$}&\multirow{2}*{$t$}&\multicolumn{3}{c|}{\eqref{rsos0}}&\multicolumn{3}{c}{\eqref{rsos1}}\\
\cline{4-9}
&&&$m_{\rm{sdp}}$&opt&time&$m_{\rm{sdp}}$&opt&time\\
\hline
14\_ieee&19&147&2346&$1.9940\text{e}3$&0.19&422&$1.9940\text{e}3$&0.10\\
\hline
30\_ieee&36&297&4828&$8.1959\text{e}3$&0.73&836&$8.1960\text{e}3$&0.37\\
\hline
30\_as&36&297&4828&$5.0371\text{e}2$&0.55&836&$5.0371\text{e}2$&0.24\\
\hline
39\_epri&49&361&5270&$1.3568\text{e}5$&0.74&966&$1.3579\text{e}5$&0.54\\
\hline
89\_pegase&101&1221&57888&$9.4098\text{e}4$&63.6&10262&$9.4101\text{e}4$&15.1\\
\hline
57\_ieee&64&563&11102&$3.6644\text{e}4$&2.36&2008&$3.6644\text{e}4$&1.06\\
\hline
118\_ieee&172&1325&25374&$9.3216\text{e}4$&8.27&4471&$9.3216\text{e}4$&2.68\\
\hline
162\_ieee\_dtc&174&1809&64874&$1.0492\text{e}5$&43.4&11327&$1.0495\text{e}5$&13.8\\
\hline
179\_goc&208&1827&25712&$6.0859\text{e}5$&10.3&4368&$6.0860\text{e}5$&3.57\\
\hline
240\_pserc&383&3039&52172&$2.8153\text{e}6$&31.9&9243&$2.8170\text{e}6$&10.7\\
\hline
300\_ieee&369&2983&53946&$5.3037\text{e}5$&40.6&9647&$5.3037\text{e}5$&10.6\\
\hline
500\_goc&671&5255&90502&$3.9697\text{e}5$&89.8&15918&$3.9697\text{e}5$&25.4\\
\hline
588\_sdet&683&5287&79362&$1.9799\text{e}5$&91.7&13933&$1.9749\text{e}5$&21.3\\
\hline
793\_goc&890&7019&104978&$1.1194\text{e}5$&105&18536&$1.1222\text{e}5$&31.5\\
\hline
1888\_rte&2178&18257&280580&$1.2537\text{e}6$&939&47205&$1.2545\text{e}6$&180\\
\hline
2000\_goc&2238&23009&455530&$9.1876\text{e}5$&2087&77974&$9.1881\text{e}5$&439\\
\end{tabular}
}
\end{table}

\vspace{1em}
{\bf Summary of the benchmark.} The benchmark confirms our theoretical findings and demonstrates that our new real reformulation of complex SDPs is indeed more efficient than the classical real reformulation. Especially, the speedup factor gets even bigger as the SDP size grows and may reach several magnitudes on certain instances. It should be also noted that solving the original complex SDP with {\tt Hypatia} is even slower than solving the classical real reformulation with {\tt MOSEK}, probably because {\tt MOSEK} is commercial and the implementations of SDP algorithms based on real numbers are more mature and robust.

\section*{Acknowledgments}
The authors would like to thank Jurij Vol\v{c}i\v{c} for helpful comments on an earlier preprint of this note.

\section*{Funding}
This work was jointly funded by National Key R\&D Program of China under grant No. 2023YFA1009401 \& 2022YFA1005102, the Strategic Priority Research Program of the Chinese Academy of Sciences XDB0640000 \& XDB0640200, and the National Natural Science Foundation of China under grant No. 12201618 \& 12171324.

\section*{Conflict of interest}
The authors declare that they have no conflict of interest.

\section*{Data availability}
The author confirms that all data generated or analysed during this study are included in this article.

\bibliographystyle{siamplain}
\bibliography{refer}
\end{document}